%\usepackage[left=1.5in,top=1in,right=1in,nohead]{geometry}
%\hoffset -9.5mm
%\usepackage{hyperref}
%\usepackage{biblatex}
%\usepackage{stmaryrd}
%\usepackage[sc,osf]{mathpazo}
%\date{}
%\newtheorem{claim}{Claim}
%\input{tcilatex}
%TeX spellcheck = en_US

\documentclass[12pt]{article}
%%%%%%%%%%%%%%%%%%%%%%%%%%%%%%%%%%%%%%%%%%%%%%%%%%%%%%%%%%%%%%%%%%%%%%%%%%%%%%%%%%%%%%%%%%%%%%%%%%%%%%%%%%%%%%%%%%%%%%%%%%%%%%%%%%%%%%%%%%%%%%%%%%%%%%%%%%%%%%%%%%%%%%%%%%%%%%%%%%%%%%%%%%%%%%%%%%%%%%%%%%%%%%%%%%%%%%%%%%%%%%%%%%%%%%%%%%%%%%%%%%%%%%%%%%%%
\usepackage{amsfonts}
\usepackage{times}
\usepackage[left=1in,top=1in,right=1in]{geometry}
\usepackage{amsmath,amsthm,amssymb}
\usepackage{color}
\usepackage{latexsym}
\usepackage{cite}
\setcounter{MaxMatrixCols}{10}
%TCIDATA{OutputFilter=Latex.dll}
%TCIDATA{Version=5.50.0.2890}
%TCIDATA{<META NAME="SaveForMode" CONTENT="1">}
%TCIDATA{BibliographyScheme=Manual}
%TCIDATA{LastRevised=Sunday, March 01, 2020 22:56:56}
%TCIDATA{<META NAME="GraphicsSave" CONTENT="32">}

\theoremstyle{plain}

\newtheorem{theorem}{Theorem}
\theoremstyle{plain}

\newtheorem{corollary}{Corollary}

\newtheorem{definition}{Definition}
\newtheorem{example}{Example}

\newtheorem{lemma}{Lemma}

\newtheorem{remark}{Remark}

\newcommand{\NKCMM}{$ N(\kappa) $-contact metric manifold}
\newcommand{\GTNWC}{generalized Tanaka-Webster connection}

\date{}

\begin{document}

\title{$ N(\kappa)- $contact Metric Manifolds with Generalized Tanaka-Webster
Connection}
\author{\textbf{ \.Inan \"Unal} \\     
%EndAName
{\normalsize Department of computer Engineering}\\{\normalsize  Munzur University, Turkey} \\{\normalsize inanunal@munzur.edu.tr}\\  \textbf{Mustafa Alt{\i}n}\\{\normalsize Technical Sciences Vocational School}\\{\normalsize Bing\"ol University, Turkey}\\ {\normalsize maltin@bingol.edu.tr}\\\ }
\maketitle

%\makenextpage

\noindent \textbf{Abstract:} In this paper we work
on  \NKCMM s\ with a \GTNWC\ . We obtain some curvature properties.
It is proven that if a \NKCMM\ with generalized
Tanaka-Webster connection is K-contact then it is an example of generalized Sasakian 
space form. Also we examine some flatness and symmetric conditions of concircular curvature tensor on a \NKCMM s with a generalized Tanaka-Webster connection. \newline 
\noindent \textbf{Keywords:} \NKCMM\ , \GTNWC\ , Sasakian space form, concircular curvature tensor

\noindent \textbf{2010 AMS Mathematics Subject Classification:} 53C15,
53C25, 53D10\newline

\section{ Introduction}
A nullity condition for an almost contact metric manifold $ (M^{(2n+1)},\phi,\xi,\eta,g) $ was defined with curvature identity $ R(X_1,X_2)\xi=0  $ for all $ X_1,X_2 \in \Gamma(TM) $ by Blair et al. in \cite{blair1995contact}.  The tangent sphere bundle of a flat Riemannian manifold admits such a structure \cite{blair2010riemannian}. By apply  $ D- $homothetic deformations to this structure, a special class of contact manifolds is obtained. Such a manifold is called a $ (\kappa, \mu)- $space and it satisfies 
\begin{equation*}
R(X_1,X_2)\xi=(\kappa I+\mu h)(\eta(X_2)X_1-\eta(X_1)X_2)
\end{equation*}
where $ \kappa $ and $ \mu $ are constants and $ 2h $ is the Lie derivative of $ \xi $ in the direction $ \phi $. On the other hand $ (\kappa,\mu)- $ nullity distribution of a contact manifold is defined by 
\begin{equation*}
 N_{p}(\kappa,\mu )=\{X_{3}\in \Gamma	(T_{p}M):R(X_{1},X_{2})X_{3} =(\kappa I+\mu h)[g(X_{2},X_{3})X_{1}-g(X_{1},X_{3})X_{2}].
\end{equation*}
for all $ X_1,X_2 \in \Gamma(TM) $. If $ \xi \in N(\kappa,\mu) $ then the manifold is called $ (\kappa,\mu)- $contact metric manifold. On a $ (\kappa,\mu)- $contact metric manifold $ \kappa\leq 1 $ and if $ \kappa=1 , \mu=0 \ (\text{i.e $ \mu $ is indeterminant})$ then the manifold is to be Sasakian. Also it is known that for $ \kappa\leq 1 $ the $ (\kappa,\mu)- $ nullity  condition determines the curvature of $ M $ completely \cite{blair1995contact}. We get $ \kappa- $nullity distribution if $ \mu=0 $, such a structure was defined in \cite{tanno1988ricci}. $ \kappa- $nullity distribution of a Riemann manifold $ M $ is determined by 

\begin{equation*}
 N_{p}(\kappa)=\{X_{3}\in \Gamma
(T_{p}M):R(X_{1},X_{2})X_{3}=\kappa[g(X_{2},X_{3})X_{1}-g(X_{1},X_{3})X_{2}]\}.
\end{equation*}
for all $ X_1,X_2 \in \Gamma(TM) $. If $ \xi \in N(\kappa) $ then $ M $ is called $ N(\kappa)- $contact metric  manifold. This type of manifolds were studied in several context by many researchers \cite{blair2005concircular, de2014class,de2014bochner,barman2017n,majhi2015classifications,de2018certain,ingalahalli2019certain,ozgur2008n,de2010weyl}.  In \cite{blair1977two} Blair proved that an almost contact metric
manifold with the condition $R(X_{1},X_{2})\xi =0$ is locally the product of
a flat $(n+1)-$dimensional manifold and an $n-$dimensional manifold of
positive constant curvature $4.$ Thus it has been seen that a $N(0)-$contact
metric manifold is locally isometric to $\mathbb{S}^{n}(4)\times \mathbb{E}%
^{2n+1}.$\par 
In \cite{blair2005concircular} Blair et al. studied on \NKCMM s\  with concircular curvature tensor. They gave an example of  \NKCMM s. \  They proved that a \NKCMM\ is locally isometric to $ S^{(2n+1)}(1)$ under the condition $ \mathcal{Z}(X_1,\xi)\mathcal{Z}=0 $ or $\mathcal{Z}(X_1,\xi)R=0 $ for concircular curvature tensor $ \mathcal{Z}$, Riemann curvature tensor $ R $ and $ X_1\in \Gamma(TM) $. De et al. \cite{de2014class} worked on some flatness condition of concircular curvature tensor and they presented an example. Also they proved that a \NKCMM \ satisfies $ \mathcal{Z}(X_1,\xi)S=0 $ if and only if the manifold is an Einstein-Sasakian manifold. \par 
 A \GTNWC\  has been introduced by Tanno \cite{tanno1989variational} as a generalization of Tabaka-Webster connection \cite{tanaka1976non,webster1978pseudo}. Contact manifolds with \GTNWC\  were studied by many researchers \cite{de2016generalized,prakasha2018conharmonic,ghosh2017kenmotsu,montano2010some,kazan2018}. The concircular curvature tensor was defined by Yano \cite{yano1940concircular}. On the contact structures there many works under certain conditions of concircular curvature tensor \cite{atcceken2015almost,unal2018concircular,turgut2017conformal}. \par 
This paper is on \NKCMM s\ with \GTNWC. Firstly we give the definition of a \GTNWC\ for a \NKCMM. \ Then we obtained some basic results and curvature relation. We prove that a \NKCMM\  with \GTNWC \ is an example of generalized Sasakian space forms. Secondly  we consider the flatness conditions and some symmetry conditions of concircular curvature tensor related to \GTNWC \ on a \NKCMM.\

\section{Preliminaries}
In this section we give some basic facts about contact manifolds and \NKCMM s\ . For details we refer to reader \cite{blair2010riemannian}.\par 

Let $M$ be a $(2n+1)-$dimensional smooth manifold. $(\phi ,\xi ,\eta )$ is
called an almost contact structure on $ M $ if we have 
\begin{equation*}
\phi ^{2}X=X-\eta (X)\xi \ \ ,\ \phi (\xi )=0,\ \ \ ,\ \eta \circ \phi =0\ \
\ ,\ \eta (\xi )=1.
\end{equation*}
for a $(1,1)$ tensor field $\phi $,  a vector field $ 
\xi $ and a $1-$ form $\eta $  on $M$ .
The kernel of $\eta $ defines a non-integrable distribution on $M$ and the distribution is called by contact distribution. The rank of $\phi $ is $2m$. The Riemannian metric $ g $ is called associated metric if 
\begin{equation}
g(\phi X_{1},\phi
X_{2})=-g(X_{1},X_{2})+\eta (X_{1})\eta (X_{2})\label{g(phiX,phiY)}
\end{equation}
  and it is called compatible  metric if
  \begin{equation*}
  d\eta  (X_{1},X_{2})=g(\phi X_{1},X_{2})
  \end{equation*}
   for all $X_{1},X_{2}\in \Gamma (TM)$. The
manifold $M$ is called by almost contact metric manifold with the structure $%
(\phi ,\xi ,\eta )$ and associated metric $g$. \par 
The $ (1,1) -$tensor field $h=\frac{1}{2}\mathcal{L}_{\xi }\phi$ has an important role in the Riemannian geometry of contact manifolds, where $\mathcal{L}$ is the Lie derivative of $ \phi $ in the direction $ \xi $ . We have following properties for $ h $ \cite{blair2010riemannian};
\begin{lemma}
	On a contact metric manifold;
	\begin{enumerate}
		\item $ h $ is symmetric operator i.e $ g(hX,Y)=g(X,hY) $.
		\item The derivation of $ h $ in the direction $ \xi $ is given by
		\begin{equation}\label{nablaXksi}
		\nabla_X\xi=-\phi X-hX  
		\end{equation}
		\item $ h $ anticommutes with $ \phi $, i.e $ h\phi+\phi h=0 $.
		\item $ h $ is trace free.
		\end{enumerate}  

\end{lemma}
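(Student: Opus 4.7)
The plan is to derive all four properties from the definition $2h = \mathcal{L}_\xi\phi$ together with the standard contact-metric identities $\phi^2 = -I + \eta\otimes\xi$, $\phi\xi = 0$, $\eta(\xi) = 1$, the skew-symmetry $g(\phi X,Y) = -g(X,\phi Y)$ that follows from (\ref{g(phiX,phiY)}), and $d\eta(X,Y) = g(\phi X,Y)$ together with $\iota_\xi d\eta = 0$.

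I would start with (3), which is the cleanest to obtain. Apply $\mathcal{L}_\xi$ to $\phi^2 = -I + \eta\otimes\xi$: the right-hand side vanishes because $\mathcal{L}_\xi\xi = 0$ and $\mathcal{L}_\xi\eta = \iota_\xi d\eta + d(\iota_\xi\eta) = 0$, while the product rule expands the left-hand side to $(\mathcal{L}_\xi\phi)\phi + \phi(\mathcal{L}_\xi\phi) = 2(h\phi + \phi h)$. Evaluating $2h = \mathcal{L}_\xi\phi$ on $\xi$ and using $\phi\xi = 0$ and $\mathcal{L}_\xi\xi = 0$ yields $h\xi = 0$ along the way, which will be needed below.

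Next I would prove (2) using the Koszul formula for $g(\nabla_X\xi,Y)$. Its antisymmetric part in $(X,Y)$ collapses to $d\eta(X,Y) = g(\phi X,Y)$ via the identity $X\eta(Y) - Y\eta(X) - \eta([X,Y]) = 2d\eta(X,Y)$, while the symmetric part equals $\tfrac12(\mathcal{L}_\xi g)(X,Y)$. Rewriting $\mathcal{L}_\xi g$ in terms of $h$ through $2h = \mathcal{L}_\xi\phi$ and the compatibility condition produces (\ref{nablaXksi}). Property (1) emerges from the same computation: the identification $(\mathcal{L}_\xi g)(X,Y) = -2g(hX,Y)$ is manifestly symmetric in $(X,Y)$, which forces $g(hX,Y) = g(Y,hX) = g(X,hY)$.

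For (4), I combine (3) with $h\xi = 0$: $h$ preserves $\ker\eta$, and on $\ker\eta$ the endomorphism $\phi$ is invertible and anticommutes with $h$, so any $\lambda$-eigenvector $v$ of $h$ produces a $(-\lambda)$-eigenvector $\phi v$, and the eigenvalues pair as $\pm\lambda$ with total trace zero. The main obstacle I anticipate is (2), specifically pinning down the sign and factor conventions between (\ref{g(phiX,phiY)}), the normalization of $d\eta(X,Y) = g(\phi X,Y)$, and the Koszul computation so that one arrives at exactly $\nabla_X\xi = -\phi X - hX$ rather than the variant $\nabla_X\xi = -\phi X - \phi h X$ that appears in \cite{blair2010riemannian} under a slightly different convention; once that bookkeeping is settled the remainder is short formal manipulation.
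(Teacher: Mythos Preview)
The paper does not supply a proof of this lemma at all: it is stated as a preliminary fact with the reference \cite{blair2010riemannian} and then used without further justification. So there is nothing to compare your argument against on the paper's side.

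Your outline is essentially the standard argument from Blair's monograph and is sound. Differentiating $\phi^{2}=-I+\eta\otimes\xi$ along $\xi$ gives (3) and $h\xi=0$; the eigenvalue-pairing argument for (4) is the usual one; and the Koszul/Lie-derivative split into symmetric and antisymmetric parts is the right way to get (1) and (2). Your caution about conventions is well placed: the identity as printed here, $\nabla_{X}\xi=-\phi X-hX$, does not match the formula $\nabla_{X}\xi=-\phi X-\phi hX$ that the cited reference actually proves, and since $\phi h\neq h$ in general these are genuinely different. No choice of sign or normalization of $d\eta$ converts one into the other, so the discrepancy is a typo in the paper rather than a bookkeeping issue you can resolve; your derivation will naturally produce the $\phi h$ version.
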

If characteristic vector field $ \xi $ is Killing vector field then $ M $ is called $ K- $contact. That is in a $ K- $contact manifold $ h=0 $.  An almost contact metric manifold $M$ is said to be normal if $ \phi $ is integrable, i.e Nijenhuis tensor of $ \phi $ vanishes. Also when the contact manifold is normal $ h=0 $ .
If the second fundamental form $ \Omega $ of an almost contact metric manifold $ M $ is $ \Omega(X_1,X_2 )=g(\phi X,Y)$ and $ M $ is normal then $ M $ is called Sasakian. A Sasakian manifold is a $ K- $contact manifold, but the converse holds only if $ dimM^{(2n+1)}=3 $.
On the other hand $ M $ is a Sasakian manifold if and only if one of the following conditions are satisfied; 
\begin{eqnarray}
 \left( {\nabla }_{X_{1}}\phi \right) X_{2}&=&g(X_{1},X_{2})\xi -\eta
 (X_{2})X_{1}\label{Sasakicond1nablaXphi}\\
 R(X_{1},X_{2})\xi &=&\eta (X_{1})X_{2}-\eta (X_{2})X_{1}\label{Sasakicond1R(X,Y)xi}
\end{eqnarray}
for all $X_{1},X_{2}\in \Gamma (TM)$. \par Such as holomorphic sectional curvature of complex manifolds we have $ \phi- $sectional curvature in contact geometry. A Sasakian manifold is called Sasakian space form if the $ \phi- $sectional curvature is constant. Alegre at al. \cite{alegre2004generalized} generalized the Sasakian space forms as in complex space forms. An almost contact metric manifold is
called a generalized Sasakian space form if its curvature has following
form; 
\begin{align}\label{Sasakispaceform}
R(X_{1},X_{2})X_{3}& =F_{1}[g(X_{2},X_{3})X_{1}-g(X_{1},X_{3})X_{2}] \\
& +F_{2}[g(X_{1},\phi X_{3})\phi X_{2}-g(X_{1},\phi X_{3})\phi
X_{2}+2g(X_{1},\phi X_{2})\phi X_{3}] \notag\\
& +F_{3}[\eta (X_{1})\eta (X_{3})X_{2}-\eta (X_{2})\eta (X_{3})X_{1}\notag +g(X_{1},X_{3})\eta (X_{2})\xi-g(X_{2},X_{3})\eta (X_{1})\xi] \notag
\end{align}%
where $F_{1},F_{2}$ and $F_{3}$ are real valued functions on $M$. This type of manifolds contain real space forms and
some classes of contact space forms with special values of $F_{i},\ \
i=1,2,3 $. \par

Let $ M $ be a \NKCMM. \ Then for all $ X_1,X_2 $ vector fields on $ M $ we have following properties \cite{blair1995contact,blair2005concircular}; 
\begin{eqnarray}
\left( {\nabla }_{X_{1}}\phi \right) X_{2}&=&g(X_{1}+hX_{1},X_{2})\xi -\eta
(X_{2})(X_{1}+hX_{1}),\label{nablaXphi}\\
h^{2}&=&(\kappa-1)\phi ^{2},\label{h^2}\\
(\nabla_{X_1}h)X_2&=&[(1-\kappa)g(X_1,\phi X_2)+g(X_1,h\phi X_2)]\xi,+\eta(X_2)[h(\phi X_1+\phi hX_1)]\label{nablaXh}\\
(\nabla_{X_{1}}\eta)X_2&=&g(X_1+hX_1,\phi X_2)\label{nablaXeta}
\end{eqnarray}
 Also we have 
\begin{eqnarray}
R(X_{1},\xi)\xi &=&\kappa [X_{1}-\eta(X_1)\xi] \label{R(X,xi)xi}\\
R(X_{1},X_{2})\xi &=&\kappa\left[ \eta (X_{2})X_{1}-\eta (X_{1})X_{2}\right],\label{R(X,Y)xi}\\
R(X_{1},\xi )X_{2}&=&-\kappa\left[ g(X_{1},X_{2})\xi -\eta (X_{2})X_{1}\right]\label{R(X,xi)Y}. 
\end{eqnarray}
The Ricci curvature  $ S $ and scalar curvature of $ M $ is given by; 
\begin{eqnarray}
 S(X_{1},X_{2})&=&2(n-1)g(X_{1},X_{2})+2(n-1)g(hX_{1},X_{2})\label{RicciX,Y}\\&&+\left[
2n\kappa-2(n-1)\right] \eta (X_{1})\eta (X_{2})\notag\\
S(X_1,\xi)&=&2\kappa n\eta(X_1), S(\xi,\xi)=2\kappa  n\label{RicX,xiandRicxi,xi}\\
\tau&=&2n(2n-2+\kappa)\label{nablascalarcurv}
\end{eqnarray}
for all $ X_{1},X_{2} \in \Gamma(TM)$. \par
We will use the following basic equalities from Riemann geometry; 
\begin{eqnarray}
&&g(X_1,X_2)=\sum_{i=1}^{2n+1}g(X_1,E_i)g(E_i,X_2),\label{g(X,Y)inbasis}\\
&&g(X,Y)=\sum_{i=1}^{2n+1}g(X_1,\phi E_i)g(\phi E_i,X_2),\label{g(phiX,phiY)inbasis}\\
&&\sum_{i=1}^{2n+1}g(hE_i,Ei)=0\label{g(hEi,Ei)}
\end{eqnarray}
where  $ E_i\in \{e_1,...,e_n,\phi e_1,...,\phi e_n,\xi\} $ is the orthonormal basis of $ M $ and $ X_1,X_2 \in \Gamma(TM) $. 
In \cite{blair1995contact}, Blair et al. showed that $ (\kappa, \mu)- $nullity distribution is invariant under 
\begin{equation}\label{barkappavebarmu}
\bar{\kappa}=\frac{\kappa+a^2-1}{a}, \ \  \bar{\mu}=\frac{\mu+2c-2}{a}.
\end{equation}
In \cite{EBoeck} Boeckx introduced the number $ I_{M}=\frac{1-\frac{\mu}{2}}{\sqrt{1-k}} $ for non-Sasakian $(k,\mu)$-contact manifold. This number is called by Boeckx invariant. There are two classes in the classification of non-Sasakian $ (\kappa,\mu)- $spaces. The first class is a manifold with constant sectional curvature $ c $. In this case $ \kappa=c(2-c) $ and $ \mu=-2c $ and by this we get an example of \NKCMM. \  The second class is on $ 3- $dimensional  Lie groups. Boeckx proved that  two Boeckx invariant of two non-Sasakian $ (\kappa,\mu)- $space are equal if and only if this manifolds  are locally isometric as contact metric manifolds.  
Blair, Kim and Tripathi \cite{blair2005concircular} gave following example of \NKCMM s\  by using the Boeckx invariant for the first class. 
\begin{example}\label{example1}
The Boeckx invariant for a $N(1-\frac{1}{n},0)$-manifold is $\sqrt{n}%
	>-1$.  By consider the tangent sphere bundle of an $(n+1)$-dimensional
	manifold of constant curvature $c$, as the resulting $D$%
	-homothetic deformation is $\kappa=c(2-c)$, $ \mu=-2c $ and from ( \ref{barkappavebarmu} ) we get 
	\begin{equation*}
	c=\frac{(\sqrt{n}\pm 1)^2}{n-1},\text{ \ \ \ \ \ \ }a=1+c.
	\textsl{}	\end{equation*}%
	and taking $c$ and $a$ to be these values we obtain $N(1-\frac{1}{n})$%
	-contact metric manifold.
	\end{example}
Blair et al. \cite{blair2005concircular} proved that a \NKCMM\  is locally isometric to Example 1 if the $ \mathcal{Z}(\xi,X). \mathcal{Z}=0 $ for the concircular curvature $ \mathcal{Z} $. Also De et al. \cite{de2014class} showed that $ \xi- $ concircularly flat \NKCMM\ is locally isometric to Example-1. \par 
Blair et al. \cite{blair1995contact} classified $ 3- $dimensional $ (\kappa, \mu)-$spaces by Lie groups of $ 3- $dimensional Riemann manifolds. They gave an example of  $ 3- $dimensional $ (\kappa, \mu)-$space. By taking $ \kappa \leq 1 $ and $ \mu=0 $ in their example we get an example of \NKCMM \ with $ \kappa=1-\lambda ^2 $ for real constant $ \lambda $.  Also De et al. \cite{de2014class} examined the example and they obtained some curvature properties. This example is given as follow. 

\begin{example}	
	Let $M=\{(x_1,x_2,x_3)\in \mathbb{R}^3\}$ be a subset of $\mathbb{R}^3 $,
	where $x_1,x_2,x_3 $ are standard coordinates in $\mathbb{R}^3 $  and $%
	E_1,E_2,E_3 $ be 3-vector fields in $\mathbb{R}^3 $ satisfies 
	\begin{eqnarray*}
		&[E_1,E_2]=(1-\lambda) E_3, \ [E_2,E_3]=2E_1\ \ \text{and}\ \ 
		[E_3,E_1]=(1-\lambda)E_2, \\
		&g(E_1,E_3)=g(E_2,E_3)=g(E_1,E_2)=0, \ \ g(E_1,E_1)=g(E_2,E_2)=1, \\
		& \eta(U)=g(U,E_1),
	\end{eqnarray*}
	where $\lambda $ is a real constant, $g $ is a Riemann metric and $U $ is
	arbitrary vector field on $M $. Take a $(1,1)- $tensor field $ \phi $ is
	defined by 
	\begin{equation*}
	\phi E_1=0, \ \phi E_2=E_3, \ \phi E_3=-E_2.
	\end{equation*}
	Using the linearity of $\phi $ and $g $ we have 
	\begin{eqnarray*}
	\eta(E_1)=1, \ \ \phi^2(U)=-U+\eta(U)E_1
	\end{eqnarray*}
	and 
	\begin{equation*}
	g(\phi X_1,\phi X_2)=g(X_1,X_2)-\eta(X_1)\eta(X_2)
	\end{equation*}
	for any $X_1, X_2 \in \Gamma(TM) $. Moreover, 
	\begin{equation*}
	hE_1=0, \ hE_2=\lambda E_2, \ \text{and} \ hE_3=-\lambda E_3.
	\end{equation*}
	In \cite{de2014class} it is showed that $(M,\phi,\eta,g) $ is a $N(1-{\lambda%
	}^2) $-contact metric manifold. The covariant derivation of orthonormal basis $ \{E_1,E_2,E_3\} $ is given as follow \cite{de2014class} :
 \begin{eqnarray}\label{example2deriv.}
 \nabla_{E_1}E_1=\nabla_{E_1}E_2=\nabla_{E_1}E_3=\nabla_{E_2}E_2=\nabla_{E_3}E_3=0\\
 \nabla_{E_2}E_1=-(1+\lambda)E_3,\ \ , \nabla_{E_2}E_3=(1+\lambda)E_1,\ \ \notag\\
 \nabla_{E_3}E_1=(1-\lambda)E_2, \ \ \nabla_{E_3}E_2=-(1-\lambda)E_1\notag.
 \end{eqnarray}	
\end{example}

\section{\NKCMM s with a Generalized Tanaka Webster Connection}

\subsection{General results}
\begin{definition}[\cite{tanno1989variational}]
	Let $ (M,\phi,\eta,\xi,g) $ be an almost contact metric manifold and $ \nabla $ be a Levi-Civita connection on $ M $. 	For any vector fields $ X_1,X_2 \in \Gamma(TM) $ the following map is called \GTNWC\ on $ M $:  
	\begin{eqnarray*}
		\mathring{\nabla}: \Gamma(TM) \times \Gamma(TM) \rightarrow \Gamma(TM)
		\end{eqnarray*}
	\begin{equation*}\label{TNKdefinalmostcontact}
	\mathring{\nabla}_{X_{1}}X_{2}=\nabla _{X_{1}}X_{2}+(\nabla_{X_{1}}\eta)X_2.\xi-\eta(X_2)\nabla_X\xi+\eta(X_1)\phi Y.
	\end{equation*}
 
\end{definition}

Then from  (\ref{nablaXksi}) and (\ref{nablaXeta}), the \GTNWC \ on  a \NKCMM\  is given by   
\begin{equation} \label{TNKWdef}
\mathring{\nabla}_{X_{1}}X_{2}=\nabla _{X_{1}}X_{2}+g(X_{1}+hX_{1},\phi
X_{2})\xi+\eta (X_{1})\phi
X_{2} +\eta (X_{2})\phi (hX_{1}+X_{1})
\end{equation}
for all $ X_1, X_2 \in \Gamma (TM) $ , where $\nabla $ is Levi-Civita connection on $M $. It is easy to verify that   
$\mathring{\nabla} $ is a linear connection. Also we get  $ (\mathring{\nabla}_{X_{1}}g)(X_{2},X_{3})=0 $, $\mathring{\nabla} $.  On the other hand the torsion of $\mathring{\nabla} $ is given by 
\begin{eqnarray*}
	\mathring{T} &=&(g(X_{1}+hX_{1},\phi X_{2})
	-g(X_{2}+hX_{2},\phi X_{1})\xi +\eta (X_{2})(\phi X_{1}+\phi hX_{1}) \\
	&&-\eta (X_{1})(\phi X_{2}+\phi hX_{2})
\end{eqnarray*}
for every $X_1,X_2 \in \Gamma(TM) $. As a result we get: 

\begin{lemma}
	The map is given by (\ref{TNKWdef}) is a linear, metric and non-symmetric connection on $ M $.  
\end{lemma}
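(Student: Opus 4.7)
The plan is to verify three things about the map $\mathring\nabla$ defined in (\ref{TNKWdef}): that it is a linear connection, that it is metric with respect to $g$, and that it has nonzero torsion. Each of these follows from the definition together with standard identities on almost contact metric manifolds, so the argument is a short assembly rather than a long calculation.

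First, I would establish linearity. Tensoriality in $X_1$ is automatic because $\nabla_{X_1}X_2$ is $C^\infty(M)$-linear in $X_1$ and each added term, namely $g(X_1+hX_1,\phi X_2)\xi$, $\eta(X_1)\phi X_2$, and $\eta(X_2)\phi(hX_1+X_1)$, is also $C^\infty(M)$-linear in $X_1$ (using linearity of $h$, $\phi$, $\eta$, and $g$ in its first slot). Additivity in $X_2$ is immediate. For the Leibniz rule $\mathring\nabla_{X_1}(fX_2) = X_1(f)X_2 + f\mathring\nabla_{X_1}X_2$ with $f\in C^\infty(M)$, the only term producing the derivative $X_1(f)$ comes from the Levi-Civita part $\nabla_{X_1}(fX_2)$; the other three terms are $C^\infty(M)$-linear in $X_2$ and therefore just pick up a factor of $f$.

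Second, I would verify $\mathring\nabla g = 0$. Since $\nabla$ is already metric, it suffices to show that the extra contributions in $g(\mathring\nabla_{X_1}X_2,X_3) + g(X_2,\mathring\nabla_{X_1}X_3)$ cancel. They split naturally into three packets, respectively proportional to $\eta(X_3)$, $\eta(X_1)$, and $\eta(X_2)$, and each packet has the shape $g(A,\phi B)+g(\phi A,B)$. This vanishes by the skew-symmetry $g(\phi U, V) = -g(U, \phi V)$, which is a standard consequence of (\ref{g(phiX,phiY)}) together with $\phi^2 = -I + \eta\otimes\xi$ and $\phi\xi = 0$.

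Finally, for non-symmetry I would simply invoke the torsion expression already displayed immediately before the statement: specializing it to $X_2 = \xi$ gives $\mathring T(X_1,\xi) = \phi X_1 + \phi h X_1$, which does not vanish identically, so $\mathring\nabla$ is not torsion-free. The only piece of the proof requiring any computation is the metric cancellation, and that reduces to applying one skew-symmetry identity three times; I do not foresee any real obstacle.
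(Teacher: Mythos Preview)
Your proposal is correct and follows the same approach as the paper: the paper simply asserts that linearity is ``easy to verify,'' states $(\mathring\nabla_{X_1}g)(X_2,X_3)=0$, and displays the torsion formula, so your outline in fact supplies more detail than the original text. The metric-compatibility cancellation via $g(\phi U,V)=-g(U,\phi V)$ and the specialization $\mathring T(X_1,\xi)=\phi X_1+\phi hX_1$ are both valid.
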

Using  (\ref{nablaXksi}),(\ref{nablaXphi}),   (\ref{nablaXh}), (\ref{nablaXeta}), (\ref{TNKWdef}) and with some computations, for a \NKCMM\ $ M $ with \GTNWC\  we have; 
\begin{eqnarray}
\mathring{\nabla}_{X_{1}}\xi&=&0, \ \ \mathring{\nabla}_{\xi}\xi =0 \label{NewnablaXksi}\\
(\mathring{\nabla}_{X_{1}}\eta )X_{2}&=&0\label{NewnalaXeta},\\
(\mathring{\nabla}_{X_{1}}\phi )X_{2}&=&(\nabla _{X_{1}}\phi)
X_{2}-g(X_{1}+hX_{1},X_{2})\xi +\eta (X_{2})hX_{1}+\eta (X_{2})X_{1}
\label{NewnablaXphi},\\
(\mathring{\nabla}_{X_{1}}h)X_{2} &=&[(\kappa-1)g(\phi X_1,X_2)+g(h X_1,\phi X_2)]\xi   \label{NewnablaXh}\\&&+ \eta(X_1)\phi (X_2+hX_2)\notag 
\end{eqnarray}
for all $ X_1,X_2 \in \Gamma(TM) $

Thus from (\ref{Sasakicond1nablaXphi}) and (\ref{NewnablaXphi}) we get following
corollary:

\begin{corollary}
Let $M $ be a \NKCMM \ with a generalized
Tanaka-Webster connection. If $M $ is Sasakian then $(\mathring{\nabla}%
_{X_{1}}\phi )X_{2}=0 $, for all $X_1, X_2\in \Gamma(TM) $.
\end{corollary}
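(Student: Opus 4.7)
The plan is to substitute the Sasakian hypothesis directly into the general formula (\ref{NewnablaXphi}) for $(\mathring{\nabla}_{X_1}\phi)X_2$ on a \NKCMM\ and verify that every term cancels. The corollary is essentially a one-line verification once the relevant identities are in place.

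First I would recall two consequences of the Sasakian assumption. On the one hand, by (\ref{Sasakicond1nablaXphi}) we have the classical Sasakian formula
\begin{equation*}
(\nabla_{X_1}\phi)X_2 = g(X_1,X_2)\xi - \eta(X_2)X_1.
\end{equation*}
On the other hand, the paper has already noted that a Sasakian manifold is $K$-contact, and in a $K$-contact manifold the tensor $h$ vanishes identically; in particular $hX_1 = 0$ for all $X_1 \in \Gamma(TM)$.

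Next I would plug these two facts into (\ref{NewnablaXphi}):
\begin{equation*}
(\mathring{\nabla}_{X_1}\phi)X_2 = (\nabla_{X_1}\phi)X_2 - g(X_1 + hX_1, X_2)\xi + \eta(X_2)hX_1 + \eta(X_2)X_1.
\end{equation*}
The term $\eta(X_2)hX_1$ drops out because $h = 0$, and in the second term $g(X_1 + hX_1, X_2)$ collapses to $g(X_1,X_2)$. Substituting the Sasakian expression for $(\nabla_{X_1}\phi)X_2$ gives
\begin{equation*}
(\mathring{\nabla}_{X_1}\phi)X_2 = \bigl[g(X_1,X_2)\xi - \eta(X_2)X_1\bigr] - g(X_1,X_2)\xi + \eta(X_2)X_1 = 0,
\end{equation*}
which is the desired conclusion.

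There is no real obstacle here; the only step one has to be careful about is invoking $h=0$, which is justified by the chain \emph{Sasakian} $\Rightarrow$ $K$-contact $\Rightarrow$ $h=0$ that the paper records in the preliminaries. Everything else is a direct algebraic cancellation.
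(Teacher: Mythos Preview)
Your proof is correct and follows exactly the route the paper indicates: the corollary is deduced directly from (\ref{Sasakicond1nablaXphi}) and (\ref{NewnablaXphi}), together with the fact that $h=0$ in the Sasakian case, and you have simply written out the cancellation that the paper leaves implicit. There is nothing to add or correct.
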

\begin{remark}
	In \cite{de2016generalized} De and Ghosh proved that on a Sasakian manifold M with generalized Tanaka-Webster connection $ \mathring{\nabla} $ we have $ (\mathring{\nabla}_{X_{1}}\phi )X_{2}=0$ for all $ X_1,X_2 \in \Gamma(TM) $. Thus above corollary is compatible with this result. 
\end{remark}

\subsection{Curvature Properties}

Let $M $ be a \NKCMM\ with generalized
Tanaka-Webster connection. The Riemannian curvature of $M$ is given by 
\begin{equation}\label{Riemanncurvdef.}
\mathring{R}(X_{1},X_{2})X_{3}={\mathring{\nabla}}_{X_{1}}{\mathring{\nabla}}_{X_{2}}{X_3}-{\mathring{\nabla}}_{X_{2}}{\mathring{\nabla}}_{X_{1}}{X_3}-{\mathring{\nabla}}_{[X_{1},X_2]}{X_3}
\end{equation}
By using (\ref{TNKWdef}), from (\ref{nablaXphi}), (\ref{nablaXh}) and with a long computation we get 
%\begin{eqnarray}
%{\mathring{\nabla}}_{X_{1}}{\mathring{\nabla}}_{X_{2}}{X_3}=
%\end{eqnarray}
%\begin{eqnarray}
%{\mathring{\nabla}}_{X_{2}}{\mathring{\nabla}}_{X_{1}}{X_3}=
%\end{eqnarray}
%\begin{eqnarray}
%{\mathring{\nabla}}_{[X_{1},X_2]}{X_3}=
%\end{eqnarray}
\begin{eqnarray}\label{TNKRiemanCRV}
\mathring{R}(X_{1},X_{2})X_{3} &&=R(X_1,X_2)X_3+\kappa \{(\eta (X_{2})g(X_1,X_3)-\eta(X_1)
 g(X_2,X_3))\xi\\&& -\eta(X_2)\eta (X_{3})X_1+\eta(X_1)\eta (X_{3})X_2 \}\notag\\&& 
-g(X_2+hX_2,\phi X_3)[\phi X_1+\phi hX_1]
+g(X_1+hX_1,\phi X_3)[\phi X_2+\phi hX_2]\notag\\&&
+[g(X_1,\phi X_2+\phi hX_2)+g(X_2,\phi X_1+ \phi hX_1)]\phi X_3\notag\
\end{eqnarray}
where $ R $ is the Riemann curvature of $ M $ with Levi-civita connection and  $ \mathring{R} $ is the Riemann curvature of $ M $ with \GTNWC. 
The symmetry properties of $\mathring{R} $ are given by 
\begin{eqnarray*}
	\mathring{R}(X_{1},X_{2},X_{3},X_{4})+\mathring{R}(X_{2},X_{1},X_{4},X_{3})&&=0\\
	\mathring{R}(X_{1},X_{2},X_{3},X_{4})+\mathring{R}(X_{1},X_{2},X_{4},X_{3})&&=0\\
	\mathring{R}(X_{1},X_{2},X_{3},X_{4})+\mathring{R}(X_{3},X_{4},X_{1},X_{2})&& =-2(g(\phi  X_1,X_4)g(hX_2,\phi X_3)-g(X_2,\phi X_3)g(\phi hX_1,X_4)\\&&-g(hX_1,\phi X_3)g(X_2,\phi X_4)+g(X_1,\phi X_3)g(\phi hX_2,X_4)\\&&-g(X_3,\phi h X_4)g(\phi X_1,X_2))
\end{eqnarray*}
Also we have 
\begin{eqnarray*}
\mathring{R}(X_{1},X_{2})X_{3}+\mathring{R}(X_{2},X_{3})X_{1}+\mathring{R}
(X_{3},X_{1})X_{2}&=&2( g(\phi X_{1},X_{2})\phi hX_{3}\\&&-g(\phi
X_{1},X_{3})\phi hX_{2}+g(\phi X_{2},X_{3})\phi hX_{1}).
\end{eqnarray*}
It is easy to see that Bianchi identity of $\mathring{R}$ is satisfied when $\xi $ is
Killing. \par 
On the other hand from (\ref{R(X,xi)xi}),(\ref{R(X,Y)xi}), (\ref{R(X,xi)Y}) and (\ref{TNKRiemanCRV}) we get 
\begin{eqnarray}\label{NewReq.}
\mathring{R}(X_{1},X_{2})\xi =
\mathring{R}(\xi ,X_{1})X_{2}=
\mathring{R}(X_{1},\xi )\xi =0.
\end{eqnarray}

A generalized Sasakian space form is a class of almost contact metric manifolds
which is defined on an almost contact metric manifold by the curvature relation (\ref{Sasakispaceform}). In the
following theorem we obtain a new example of generalized Sasakian space
forms.

\begin{theorem}
Let $M $ be a \NKCMM\ with generalized
Tanaka-Webster connection\ . If $\xi $ is Killing vector field then $M $ is a
generalized Sasakian space forms with $F_{1}=F_{3}=\kappa,\ F_{2}=1 $.
\end{theorem}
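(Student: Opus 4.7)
Because $\xi$ is Killing, $M$ is $K$-contact, which forces $h = 0$; the nullity identity $h^2 = (\kappa - 1)\phi^2$ from (\ref{h^2}) then yields $\kappa = 1$, making $M$ Sasakian and allowing use of the identities (\ref{Sasakicond1nablaXphi})--(\ref{Sasakicond1R(X,Y)xi}). My strategy is simply to substitute $h = 0$ into the curvature formula (\ref{TNKRiemanCRV}) and then match the resulting expression for $\mathring{R}(X_1, X_2)X_3$, term by term, against the generalized Sasakian space form template (\ref{Sasakispaceform}).

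Under $h = 0$, each bracket $[\phi X_i + \phi h X_i]$ in (\ref{TNKRiemanCRV}) collapses to $\phi X_i$, and the composite scalar $g(X_1, \phi X_2 + \phi h X_2) + g(X_2, \phi X_1 + \phi h X_1)$ acting on $\phi X_3$ is handled using the skew-symmetry $g(Y, \phi X) = -g(\phi Y, X)$ to rewrite it cleanly as a multiple of $g(X_1, \phi X_2)\phi X_3$. This produces a closed-form expression for $\mathring{R}(X_1, X_2)X_3$ as $R(X_1, X_2)X_3$ plus an explicit, short combination of $\xi$-, $\eta$-, and $\phi$-terms.

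Finally I regroup that result into the three blocks prescribed by (\ref{Sasakispaceform}). The $\kappa$-weighted $\eta, \xi$ terms line up with the $F_3$-block and force $F_3 = \kappa$; the $\phi$-terms line up with the $F_2$-block and force $F_2 = 1$; and $R(X_1, X_2)X_3$ is reorganized, using the Sasakian identity (\ref{Sasakicond1R(X,Y)xi}) together with the skew-symmetries of $R$, to supply the $F_1$-block with $F_1 = \kappa$. The main obstacle is this last step, since a general Sasakian $R$ carries more information than a pure sectional-curvature-like tensor; the plan is to exploit the $\kappa$-nullity condition on $\xi$ and the Sasakian identities to absorb any residual $\phi$-curvature into the $F_2$-block already identified, so that no extra terms survive.
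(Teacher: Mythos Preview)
Your plan coincides with the paper's proof through the substitution $h=0$ in (\ref{TNKRiemanCRV}); the paper then simply records the resulting expression for $\mathring R$ and reads off $F_1=F_3=\kappa$, $F_2=1$, with no further argument.  In particular, the paper does \emph{not} isolate or justify the step you flag as the ``main obstacle'': it writes the final formula directly, in effect treating $R(X_1,X_2)X_3$ as if it automatically supplies the $F_1$-block $\kappa[g(X_2,X_3)X_1-g(X_1,X_3)X_2]$.

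Your instinct that this last step needs work is correct, but the fix you sketch cannot close the gap.  The identities (\ref{Sasakicond1nablaXphi})--(\ref{Sasakicond1R(X,Y)xi}) and the $\kappa$-nullity condition constrain $R$ only when one of its arguments is $\xi$; they say nothing about $R(X_1,X_2)X_3$ for $X_1,X_2,X_3$ all orthogonal to $\xi$.  A Sasakian manifold (which is what $h=0$, $\kappa=1$ gives you) need not have constant $\phi$-sectional curvature, so the residual
\[
R(X_1,X_2)X_3-\kappa\bigl[g(X_2,X_3)X_1-g(X_1,X_3)X_2\bigr]
\]
is in general \emph{not} a constant multiple of the $F_2$-block tensor $g(X_1,\phi X_3)\phi X_2-g(X_2,\phi X_3)\phi X_1+2g(X_1,\phi X_2)\phi X_3$, and hence cannot be absorbed there.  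In short, the gap you spotted is real and is present, unaddressed, in the paper's own argument; if your aim is to mirror the paper, its entire proof is your first two paragraphs---set $h=0$, simplify, and match terms.
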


\begin{proof}
	Let $ M $ be a \NKCMM\ with \GTNWC\ . If $ \xi $ is Killing then from (\ref{TNKRiemanCRV}) we have
	\begin{eqnarray*}
		\mathring{R}(X_{1},X_{2})X_{3} &=&\kappa \left\{
		g(X_{2},X_{3})X_{1}-g(X_{1},X_{3})X_{2}\right\} \\
		&&+g(X_{1},\phi X_{3})\phi X_{2}-g(X_{2},\phi X_{3})\phi X_{1}+2g(X_{1},\phi
		X_{2})\phi X_{3} \\
		&&+\kappa \left\{ \eta (X_{1})\eta (X_{3})X_{2}-\eta (X_{2})\eta
		(X_{3})X_{1}+g(X_{1},X_{3})\eta (X_{2})\xi -g(X_{2},X_{3})\eta (X_{1})\xi
		\right\}.
	\end{eqnarray*}
 This shows us $ M $ is generalized Sasakian space forms with $F_{1}=F_{3}=\kappa ,\ F_{2}=1 $.
\end{proof}
The Ricci curvature of a \NKCMM\ with \GTNWC \ is defined by 
\begin{equation*}
\mathring{S}(X_1,X_4)=\sum_{i=1}^{2n+1}\mathring{R}(X_1,E_i,E_i,X_4)
\end{equation*}
where $ E_i , 1\leq i \leq 2n+1$ are the orthonormal basis of $ M $ and $ X_1,X_4 \in \Gamma(TM).  $ Thus from (\ref{g(phiX,phiY)}),(\ref{g(X,Y)inbasis}), (\ref{g(phiX,phiY)inbasis}), (\ref{g(hEi,Ei)}) and (\ref{TNKWdef}) we have 
\begin{eqnarray}
\mathring{S}(X_1,X_4)&&=S(X_1,X_4)+(3-\kappa)g(X_1,X_4)\\&&+(-(2n-1)\kappa-3)\eta(X_1)\eta(X_4)-g(hX_1,hX_4). \notag
\end{eqnarray}
On the other hand since $ h $ is symmetric from (\ref{h^2}) we get 
\begin{equation*}
g(hX_1,hX_4)=(\kappa-1)(-g(X_1,X_4)+
\eta(X_1)\eta(X_4)).
\end{equation*}
Thus the Ricci curvature of a \NKCMM\ with \GTNWC \ is obtained as  
\begin{equation}\label{NewRicci}
\mathring{S}(X_1,X_4)=S(X_1,X_4)+2g(X_1,X_4)-2(n\kappa+1)\eta(X_1)\eta(X_4). 
\end{equation}

and so, from (\ref{RicciX,Y}) we get 
\begin{equation*}
\mathring{S}(X_1,X_4)=2ng(X_{1},X_{2})+2(n-1)g(hX_{1},X_{2})-2n \eta (X_{1})\eta (X_{2}).
\end{equation*}

 Also we have 
\begin{equation}\label{NewRic(X,xi)}
\mathring{S}(X_{1},\xi )=\mathring{S}(\xi,\xi )=0.
\end{equation}

 From (\ref{RicciX,Y}) we know if a \NKCMM\ is Sasakian then it is $ \eta- $Einstein. 
 \begin{corollary}
 	If a \NKCMM\ with Levi Civita connection $ M $ is Sasakian then it is $ \eta- $Einstein with \GTNWC.  
 \end{corollary}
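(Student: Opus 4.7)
The plan is to substitute the Sasakian specialization directly into the Ricci curvature formula for $\mathring{\nabla}$ already derived in this subsection, and to read off the $\eta$-Einstein structure.

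First I would recall that on an $N(\kappa)$-contact metric manifold, being Sasakian forces $\kappa=1$ and $h=0$: the $N(\kappa)$ condition (\ref{R(X,Y)xi}) reduces to the Sasakian condition (\ref{Sasakicond1R(X,Y)xi}) precisely when $\kappa=1$, and every Sasakian manifold is $K$-contact so $h=0$. With these specializations, the classical Ricci formula (\ref{RicciX,Y}) collapses to $S(X_{1},X_{2})=2(n-1)g(X_{1},X_{2})+2\eta(X_{1})\eta(X_{2})$.

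Next I would plug this into the Ricci formula for the generalized Tanaka--Webster connection just established, namely
\[
\mathring{S}(X_{1},X_{2})=S(X_{1},X_{2})+2g(X_{1},X_{2})-2(n\kappa+1)\eta(X_{1})\eta(X_{2}),
\]
using $\kappa=1$. A short arithmetic check gives
\[
\mathring{S}(X_{1},X_{2})=2n\,g(X_{1},X_{2})-2n\,\eta(X_{1})\eta(X_{2}),
\]
which is of the form $\alpha g+\beta\,\eta\otimes\eta$ with $\alpha=2n$ and $\beta=-2n$; this is exactly the $\eta$-Einstein condition with respect to $\mathring{\nabla}$. Alternatively, one can reach the same conclusion even more quickly by noting that the simplified expression $\mathring{S}(X_{1},X_{2})=2n g(X_{1},X_{2})+2(n-1)g(hX_{1},X_{2})-2n\eta(X_{1})\eta(X_{2})$ derived immediately after (\ref{NewRicci}) has its middle term killed by $h=0$.

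There is no real obstacle here: the statement is essentially a consequence of (\ref{NewRicci}) combined with the vanishing of $h$ in the Sasakian case. The only thing to be careful about is confirming that ``Sasakian'' in the $N(\kappa)$ setting does mean $\kappa=1$, so that the substitution into (\ref{NewRicci}) is legitimate; this is standard but worth stating explicitly for the reader before doing the substitution.
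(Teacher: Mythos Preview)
Your proposal is correct and follows essentially the same route as the paper: specialize (\ref{RicciX,Y}) to the Sasakian case to get $S=2(n-1)g+2\,\eta\otimes\eta$, then substitute into (\ref{NewRicci}) with $\kappa=1$ to obtain $\mathring{S}=2n\,g-2n\,\eta\otimes\eta$. Your explicit remark that Sasakian forces $\kappa=1$ and $h=0$ is a helpful clarification the paper leaves implicit, but the argument is otherwise identical.
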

\begin{proof}
	Let $ M $ be a \NKCMM\ with Levi-Civita connection. If $ M $ is Sasakian then from (\ref{RicciX,Y}) we get 
	\begin{equation*}
	S(X_1,X_4)=2(n-1)g(X_1,X_4)+2\eta(X_1)\eta(X_4).
	\end{equation*} 
	Thus from (\ref{NewRicci}) we obtain 
	\begin{equation*}
	\mathring{S}(X_1,X_4)=2ng(X_1,X_4)-2n\eta(X_1)\eta(X_4)
	\end{equation*}
	which shows us $ M $ with \GTNWC\ is $ \eta- $Einstein.
\end{proof}
The scalar curvature of a \NKCMM\ with \GTNWC\ $ M $ is obtained as 
\begin{equation*}
\mathring{\tau}=\tau+4n-2n\kappa.
\end{equation*}
From (\ref{nablascalarcurv}) we get 
\begin{equation}\label{Newnablascalar}
\mathring{\tau}=4n^2.
\end{equation}

\section{Concircular Curvature Tensor on $ N(\kappa)-$Contact Metric Manifold with Generalized Tanaka-Webster Connection }
 The concircular curvature was defined by Yano \cite{yano1940concircular}. Blair et al. \cite {blair2005concircular,de2014class} studied on 
 \NKCMM\  under certain curvature conditions via concircular curvature tensor.  In this section we study on concircular curvature tensor on a \NKCMM\ $M $ with generalized Tanaka-Webster connection. \par 
 Concircular curvature tensor $ \mathcal{Z} $ on \NKCMM\ with \GTNWC\ is given by
\begin{eqnarray*}
\mathcal{\mathring{Z}}(X_{1},X_{2})X_{3}=\mathring{R}%
(X_{1},X_{2})X_{3}-\frac{\mathring{\tau}}{2n(2n+1)}[g(X_{2},X_{3})X_{1}-g(X_1,X_{3})X_{2}].
\end{eqnarray*}
From (\ref{Newnablascalar}) we get 
\begin{eqnarray} \label{NewZ}
	\mathcal{\mathring{Z}}(X_{1},X_{2})X_{3}=\mathring{R}%
	(X_{1},X_{2})X_{3}-\frac{2n}{2n+1}[g(X_{2},X_{3})X_{1}-g(X_1,X_{3})X_{2}].
\end{eqnarray}
For all $X_1, X_2, X_3 \in \Gamma(TM) $  we obtain 
\begin{eqnarray}
&&\mathcal{\mathring{Z}}(X_{1},\xi )\xi= K\phi^2X_1\label{NewZ(X,xi)xi}\\&&
\mathcal{\mathring{Z}}(X_{1},X_{2})\xi = K(\eta(X_2)X_1-\eta(X_1)X_2)\label{NewZ(X,Y)xi}\\
&& 
\mathcal{\mathring{Z}}(X_{1},\xi )X_{2}= K(\eta(X_2)X_1-g(X_1,X_2)\xi)\label{NewZ(X,xi)Y}\\&&
\eta(\mathcal{\mathring{Z}}(X_{1},X_{2})X_{3})=K(\eta(X_3)g(X_1,X_2)-\eta(X_1)g(X_3,X_2))\label{etanNewZ}
\end{eqnarray}
where $ K=-\frac{2n}{2n+1} $. \par

A \NKCMM\ with Levi-Civita conectiom is called $\xi- $concircularly flat if $\mathcal{Z}%
(X_1,X_2)\xi=0 $.  In \cite{de2014class} De et al. proved that $%
\xi- $concircularly flat \NKCMM \ is locally isometric
to Example \ref{example1}. We recall a \NKCMM\ with \GTNWC\   by $\mathring{\xi}- $concircularly flat if $\mathring{\mathcal{Z}}
(X_1,X_2)\xi=0 $. Thus from (\ref{NewZ(X,Y)xi}) we obtain: 
\begin{theorem}
	A \NKCMM\ with \GTNWC\  can not to be $\mathring{\xi}- $concircularly flat.
\end{theorem}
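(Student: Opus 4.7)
The plan is to derive a contradiction directly from the already-established formula (\ref{NewZ(X,Y)xi}). If $M$ were $\mathring{\xi}$-concircularly flat, then by definition $\mathring{\mathcal{Z}}(X_1,X_2)\xi=0$ for every pair of vector fields $X_1,X_2\in\Gamma(TM)$, and equating this with the right-hand side of (\ref{NewZ(X,Y)xi}) would force
\begin{equation*}
K\bigl(\eta(X_2)X_1-\eta(X_1)X_2\bigr)=0
\end{equation*}
identically on $M$, where $K=-\tfrac{2n}{2n+1}$.

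Next, I would specialize this identity to a carefully chosen pair to extract $K$ on its own. The natural choice is $X_2=\xi$ together with $X_1$ picked from the contact distribution, i.e.\ $\eta(X_1)=0$ and $X_1\neq 0$ (such vectors exist because $\ker\eta$ is a $2n$-dimensional distribution). Under this choice $\eta(X_2)=\eta(\xi)=1$ and $\eta(X_1)=0$, so the displayed equation collapses to $K X_1=0$, and nonzeroness of $X_1$ forces $K=0$.

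Finally I would observe that $K=-\tfrac{2n}{2n+1}$ is strictly negative because $M$ has dimension $2n+1\ge 3$, so $n\ge 1$. This contradicts $K=0$ and rules out $\mathring{\xi}$-concircular flatness. There is essentially no obstacle here: the entire argument rests on (\ref{NewZ(X,Y)xi}), and the only substantive point is that the scalar coefficient $K$ never vanishes on an honest $N(\kappa)$-contact metric manifold, which is immediate from its explicit expression in terms of $n$.
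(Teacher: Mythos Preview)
Your proof is correct and follows exactly the paper's approach: the paper simply states ``Thus from (\ref{NewZ(X,Y)xi}) we obtain'' the theorem, and you have spelled out the immediate contradiction that (\ref{NewZ(X,Y)xi}) yields, namely that $K=-\tfrac{2n}{2n+1}\neq 0$ prevents $\mathring{\mathcal{Z}}(X_1,X_2)\xi$ from vanishing identically.
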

A \NKCMM\ is called $ \eta- $Einstein if $ S(X_1,X_2)=Ag(X_1,X_2)+B\eta(X_1)\eta(X_2) $ for smooth functions $ A $ and $ B $  on the manifold. $ \eta- $Einstein manifolds are generalization of Einstein manifolds which arises from the general relativity. In the following result we give a condition for a \NKCMM \ with \GTNWC\ to be $ \eta- $Einstein. \par   
A \NKCMM\ is called $ \phi- $concircularly flat with \GTNWC\ if $ g(\mathcal{\mathring{Z}}(\phi X_1,\phi X_2)\phi X_3,\phi X_4)=0$.

\begin{theorem}
A $ \phi- $concircularly flat \NKCMM \ with \GTNWC\ is $ \eta -$Einstein. 
\end{theorem}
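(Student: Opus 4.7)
The plan is to convert the $\phi$-concircular flatness condition into a pointwise identity for $\mathring{S}(\phi X_2,\phi X_3)$ by contracting over an orthonormal frame, and then to undo the $\phi$'s using $\phi^{2}=-I+\eta\otimes\xi$ together with (\ref{NewRic(X,xi)}).

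First I would expand the hypothesis $g(\mathring{\mathcal{Z}}(\phi X_1,\phi X_2)\phi X_3,\phi X_4)=0$ using (\ref{NewZ}) to obtain
\[
g\bigl(\mathring{R}(\phi X_1,\phi X_2)\phi X_3,\phi X_4\bigr)=\frac{2n}{2n+1}\bigl[g(\phi X_2,\phi X_3)g(\phi X_1,\phi X_4)-g(\phi X_1,\phi X_3)g(\phi X_2,\phi X_4)\bigr].
\]
Let $\{E_1,\dots,E_{2n+1}\}$ be a local $g$-orthonormal frame with $E_{2n+1}=\xi$, set $X_1=X_4=E_i$, and sum over $i$. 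On the right-hand side, $\sum_i g(\phi E_i,\phi E_i)=2n$ and $\sum_i g(\phi X_2,\phi E_i)g(\phi E_i,\phi X_3)=g(\phi X_2,\phi X_3)$, since $\phi X_2,\phi X_3\in\ker\eta$ and $\{\phi E_i\}_{i=1}^{2n}$ is an orthonormal basis of $\ker\eta$. The right-hand side thus collapses to $\tfrac{2n(2n-1)}{2n+1}\,g(\phi X_2,\phi X_3)$.

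For the left-hand side, the antisymmetries of $\mathring{R}$ in its first and last pairs of arguments give
\[
\sum_i g\bigl(\mathring{R}(\phi E_i,\phi X_2)\phi X_3,\phi E_i\bigr)=\sum_i \mathring{R}(\phi X_2,\phi E_i,\phi E_i,\phi X_3).
\]
Adjoining $\xi$ to $\{\phi E_i\}_{i=1}^{2n}$ yields an orthonormal frame of $TM$, and the $\xi$-term vanishes by (\ref{NewReq.}), so this sum equals $\mathring{S}(\phi X_2,\phi X_3)$. Equating the two sides produces
\[
\mathring{S}(\phi X_2,\phi X_3)=\frac{2n(2n-1)}{2n+1}\,g(\phi X_2,\phi X_3).
\]

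The final step is to replace $X_2\mapsto\phi X_2$ and $X_3\mapsto\phi X_3$. Expanding $\phi^2 X=-X+\eta(X)\xi$ bilinearly and invoking $\mathring{S}(X,\xi)=\mathring{S}(\xi,\xi)=0$ from (\ref{NewRic(X,xi)}), the left-hand side collapses to $\mathring{S}(X_2,X_3)$, while the right-hand side becomes $\tfrac{2n(2n-1)}{2n+1}\bigl[g(X_2,X_3)-\eta(X_2)\eta(X_3)\bigr]$, which is the desired $\eta$-Einstein form. The main delicate point is the contraction step: one has to invoke both antisymmetries of $\mathring{R}$ together with $\mathring{R}(\,\cdot\,,\xi)\xi=0$ in order to recognise the trace over $\{\phi E_i\}$ as $\mathring{S}(\phi X_2,\phi X_3)$, because $\{\phi E_i\}_{i=1}^{2n+1}$ is not itself an orthonormal frame (the term $\phi E_{2n+1}=\phi\xi=0$ has to be absorbed by the vanishing $\xi$-contribution to Ricci).
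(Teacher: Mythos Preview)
Your argument is correct and follows essentially the same route as the paper: expand the $\phi$-concircular flatness via (\ref{NewZ}), contract over an orthonormal $\phi$-basis, use (\ref{NewReq.}) to identify the trace as $\mathring{S}$ on $\phi$-arguments, and then remove the $\phi$'s via $\phi^{2}=-I+\eta\otimes\xi$ together with (\ref{NewRic(X,xi)}). The only cosmetic differences are that the paper contracts over the second and third slots ($X_{2}=X_{3}=E_{i}$) rather than your first and fourth, and that the paper takes one extra step you omit: it substitutes (\ref{NewRicci}) to rewrite the conclusion for the Levi--Civita Ricci tensor $S$, obtaining $S(X_{1},X_{4})=A\,g(X_{1},X_{4})+B\,\eta(X_{1})\eta(X_{4})$ explicitly; since $\mathring{S}-S$ is itself a combination of $g$ and $\eta\otimes\eta$, this is immediate from what you have shown.
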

\begin{proof}
Let $ M $ be a $ \phi- $concircularly flat  \NKCMM\  with the  \GTNWC\ .  Thus from (\ref{NewZ}) we have
\begin{eqnarray}\label{R(phiX,..)}
g(\mathring{R}(\phi X_1,\phi X_2)\phi X_3,\phi X_4)=\frac{2n}{2n+1}(g(\phi X_2,\phi X_3)g(\phi X_1, \phi X_4)-g(\phi X_2, \phi X_4)).
\end{eqnarray}
Let $ \{e_1,...,e_n, \phi e_1,...,\phi e_n, \xi\} $ be an orthonormal $ \phi $-basis of the tangent space. Putting $ X_2=X_3=E_i $ in (\ref{R(phiX,..)})  and by taking summation over $ i=1 $ to $ 2n+1 $ we get 
\begin{equation*}
\mathring{S}(X_1,X_4)=\frac{2n(2n-1)}{2n+1}g(\phi X_1,\phi X_4).
\end{equation*}
Replacing $ X_1 $ and $ X_4 $ by $ \phi X_1 $ and $ \phi X_4 $ and using (\ref{NewRicci}), (\ref{NewRic(X,xi)}) we obtain
\begin{equation*}
S(X_1,X_4)=(\frac{-2(2n^2+n+1)}{2n+1})g(X_1,X_4)+(\frac{2n(2n-1)}{2n+1}+2(n\kappa +1))\eta(X_1)\eta(X_4).
\end{equation*}
Thus $ M $ is $ \eta- $Einstein. 

\end{proof}
A Riemann manifold $ M $ is called locally symmetric or semi-symmetric if $ R.R=0 $. Also if $ R.S=0 $ then $ M $ is called Ricci semi-symmetric manifold. For two $ (1,3) -$type tensors $ \mathcal{T}_1,\mathcal{T}_2 $ we have 
\begin{eqnarray}\label{generaltensorproduct}
	(\mathcal{T}_1(X_1,X_2).\mathcal{T}_2)(X_3,X_4)X_5&=&\mathcal{T}_1(X_1,X_2)\mathcal{T}_2(X_3,X_4)X_5-\mathcal{T}_2(\mathcal{T}_1(X_1,X_2)X_3,X_4)X_5\\&&-\mathcal{T}_2(X_3,\mathcal{T}_1(X_1,X_2)X_4)X_5-\mathcal{T}_2(X_3,X_4)\mathcal{T}_1(X_1,X_2)X_5\notag 
\end{eqnarray}
and for $ (0,2 )-$type tensor $ \mathcal{\omega} $ we have 
\begin{equation}\label{generalT.S}
(\mathcal{T}_1(X_1,X_2).\mathcal{\omega})(X_3,X_4)=\mathcal{\omega}(\mathcal{T}_1(X_1,X_2)X_3,X_4)+\mathcal{\omega}(X_3,\mathcal{T}_1(X_1,X_2)X_4).
\end{equation}
In \cite{blair2005concircular} the authors proved that a \NKCMM\  which satisfies $\mathcal{Z}(\xi ,X)\mathcal{Z}=0$ is locally
isometric to $\mathbb{E}^{n+1}\times\mathbb{R}^n $. Also a \NKCMM\ is $\mathcal{Z}(\xi ,X)S=0$  if and only if the manifold is an Einstein-Sasakian
manifold. We consider a \NKCMM \ with
generalized Tanaka-Webster connection under the conditions $\mathcal{\mathring{Z}}%
(\xi ,X)\mathcal{\mathring{Z}}=0$ and $\mathcal{\mathring{Z}}%
(\xi ,X){\mathring{S}}=0$  for all $ X \in \Gamma(TM) $.
\begin{theorem}
	A \NKCMM \ with \GTNWC\ can not satisfy $\mathcal{\mathring{Z}}%
	(\xi ,X){\mathring{S}}=0$. 
\end{theorem}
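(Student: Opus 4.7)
The plan is to show that imposing $\mathring{\mathcal{Z}}(\xi,X).\mathring{S}=0$ forces the Ricci tensor $\mathring{S}$ of the generalized Tanaka-Webster connection to vanish identically, which is inconsistent with the scalar curvature formula (\ref{Newnablascalar}).

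First I would expand the derivation condition using the general identity (\ref{generalT.S}) for the action of a $(1,3)$-tensor on a $(0,2)$-tensor, writing
\begin{equation*}
(\mathcal{\mathring{Z}}(\xi,X_1).\mathring{S})(X_2,X_3)=\mathring{S}(\mathcal{\mathring{Z}}(\xi,X_1)X_2,X_3)+\mathring{S}(X_2,\mathcal{\mathring{Z}}(\xi,X_1)X_3)=0.
\end{equation*}
Next, I would substitute the explicit form of $\mathcal{\mathring{Z}}(\xi,X_1)X_2$ obtained from (\ref{NewZ(X,xi)Y}) together with the skew-symmetry in the first two arguments, namely $\mathcal{\mathring{Z}}(\xi,X_1)X_2=K\bigl(g(X_1,X_2)\xi-\eta(X_2)X_1\bigr)$ with $K=-\tfrac{2n}{2n+1}$. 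Using the bilinearity of $\mathring{S}$ together with $\mathring{S}(X,\xi)=0$ from (\ref{NewRic(X,xi)}), all terms that produce a $\xi$-slot in $\mathring{S}$ vanish, leaving
\begin{equation*}
K\bigl[\eta(X_2)\mathring{S}(X_1,X_3)+\eta(X_3)\mathring{S}(X_1,X_2)\bigr]=0.
\end{equation*}

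Since $K\neq 0$, this identity must hold for all $X_1,X_2,X_3$. The decisive step is to specialize $X_3=\xi$: the first summand becomes $\mathring{S}(X_1,\xi)\eta(X_2)=0$ by (\ref{NewRic(X,xi)}), and $\eta(\xi)=1$ leaves $\mathring{S}(X_1,X_2)=0$ for all $X_1,X_2\in\Gamma(TM)$. Tracing this over an orthonormal basis would give $\mathring{\tau}=0$, which contradicts (\ref{Newnablascalar}), where $\mathring{\tau}=4n^2>0$. Hence the condition $\mathcal{\mathring{Z}}(\xi,X).\mathring{S}=0$ cannot hold on any \NKCMM{} equipped with the \GTNWC.

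I do not expect any serious obstacle; the argument is a direct algebraic manipulation, and the only thing to be careful about is consistently using the antisymmetry $\mathcal{\mathring{Z}}(\xi,X)=-\mathcal{\mathring{Z}}(X,\xi)$ when moving from (\ref{NewZ(X,xi)Y}) into the derivation identity. The essential geometric content is that $\mathring{S}$ already annihilates $\xi$, so the $\xi$-substitution collapses the equation immediately, and the nonvanishing of $\mathring{\tau}$ rules out the degenerate conclusion.
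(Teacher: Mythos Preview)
Your argument is correct and follows essentially the same route as the paper: expand the derivation via (\ref{generalT.S}), use $\mathring{S}(\cdot,\xi)=0$ from (\ref{NewRic(X,xi)}) together with (\ref{NewZ(X,xi)Y}), specialize $X_3=\xi$, and conclude $\mathring{S}\equiv 0$, contradicting $\mathring{\tau}=4n^2$. The paper is simply terser---it sets $X_3=\xi$ immediately rather than first writing out the general identity, and leaves the final contradiction implicit---but the substance is identical.
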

\begin{proof}
	Let $ M $ be a \NKCMM \ with \GTNWC\  which is satisfied the condition  $\mathcal{\mathring{Z}}%
	(\xi ,X_1){\mathring{S}}=0$. Then from (\ref{generalT.S}) we get 
	\begin{equation*}
	\mathring{S}(\mathcal{\mathring{Z}}(\xi ,X_1)X_2,X_3)+	\mathring{S}(X_2,\mathcal{\mathring{Z}}(\xi,X_1)X_3)=0. 
	\end{equation*}
	Let take $ X_3=\xi $ then by using (\ref{NewRic(X,xi)}) and  (\ref{NewZ(X,xi)Y}) we obtain 
	\begin{equation*}
	K\mathring{S}(X_1,X_2)=0. 
	\end{equation*}
	This completes the poof. 
\end{proof}
\begin{theorem}
A \NKCMM \ with \GTNWC\ can not satisfy $\mathcal{\mathring{Z}}(\xi ,X)\mathcal{\mathring{Z}}=0$. 
\end{theorem}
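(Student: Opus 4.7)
The plan is to mirror the argument for the preceding theorem. I would begin from $(\mathcal{\mathring{Z}}(\xi,X_1)\cdot\mathcal{\mathring{Z}})(X_2,X_3)X_4=0$ expanded via (\ref{generaltensorproduct}), and then specialise to $X_4=\xi$. This substitution is the natural one: after it, every factor of $\mathcal{\mathring{Z}}$ has $\xi$ in at least one slot, so all four resulting terms collapse through (\ref{NewZ(X,Y)xi}), (\ref{NewZ(X,xi)Y}) and (\ref{NewZ(X,xi)xi}) into algebraic expressions in $X_1,X_2,X_3,\xi$.

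Next I would expand the four terms methodically, reducing the inner $\mathcal{\mathring{Z}}$-factor first and the outer one second. The first three terms each contribute only $K^2$-multiples of $\xi$, $X_1$, $X_2$, $X_3$, while the last term, through $\mathcal{\mathring{Z}}(\xi,X_1)\xi=K(\eta(X_1)\xi-X_1)$, additionally produces the outer factor $K\,\mathcal{\mathring{Z}}(X_2,X_3)X_1$. The main obstacle is not conceptual but organisational: one must regroup the dozen or so resulting subterms by their coefficients along $\xi,X_1,X_2,X_3$ and verify that the $\xi$- and $X_1$-contributions cancel completely, leaving only
\begin{equation*}
K\,\mathcal{\mathring{Z}}(X_2,X_3)X_1 = K^2\bigl[g(X_1,X_3)X_2 - g(X_1,X_2)X_3\bigr].
\end{equation*}

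Dividing by $K\neq 0$ gives the clean identity $\mathcal{\mathring{Z}}(X_2,X_3)X_1=K[g(X_1,X_3)X_2-g(X_1,X_2)X_3]$, valid for all $X_1,X_2,X_3\in\Gamma(TM)$. To close the argument I would compare with the definition (\ref{NewZ}) of $\mathcal{\mathring{Z}}$; by symmetry of $g$ the two Kulkarni--Nomizu corrections cancel exactly, forcing $\mathring{R}\equiv 0$ and hence $\mathring{\tau}=0$, which contradicts (\ref{Newnablascalar}) where $\mathring{\tau}=4n^2>0$. A briefer alternative that avoids invoking (\ref{NewZ}) is to apply $\eta$ to the displayed identity, use (\ref{etanNewZ}) to obtain $\eta(X_1)g(X_2,X_3)-2\eta(X_2)g(X_1,X_3)+g(X_1,X_2)\eta(X_3)=0$, and evaluate at $X_1=\xi$ with $X_2=X_3$ a unit vector orthogonal to $\xi$; this yields $1=0$ and finishes the proof.
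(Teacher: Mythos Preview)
Your plan follows essentially the same route as the paper: expand $(\mathcal{\mathring{Z}}(\xi,\cdot)\cdot\mathcal{\mathring{Z}})$ via (\ref{generaltensorproduct}), specialise the last argument to $\xi$, reduce all four terms through (\ref{NewZ(X,Y)xi})--(\ref{NewZ(X,xi)Y}), and read off a contradiction from the resulting closed expression for $\mathcal{\mathring{Z}}$. Your key identity
\[
\mathcal{\mathring{Z}}(X_2,X_3)X_1=K\bigl[g(X_1,X_3)X_2-g(X_1,X_2)X_3\bigr]
\]
is correct, and your first way of finishing (substituting (\ref{NewZ}) so that the $K$-terms cancel, forcing $\mathring{R}\equiv 0$ and hence $\mathring{\tau}=0$, contradicting (\ref{Newnablascalar})) is clean and valid. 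The paper instead specialises your identity further by putting $X_3=\xi$ and comparing with (\ref{NewZ(X,xi)Y}) to force $g(\phi X_1,\phi X_2)=0$; both end-games work and are of comparable length.

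One caution about your ``briefer alternative'': it hinges on formula (\ref{etanNewZ}) exactly as printed. If you recompute $\eta(\mathcal{\mathring{Z}}(X_1,X_2)X_3)$ directly from (\ref{NewZ}) using $\eta(\mathring{R}(X_1,X_2)X_3)=-g(\mathring{R}(X_1,X_2)\xi,X_3)=0$, you obtain
\[
\eta(\mathcal{\mathring{Z}}(X_1,X_2)X_3)=K\bigl[g(X_2,X_3)\eta(X_1)-g(X_1,X_3)\eta(X_2)\bigr],
\]
and with this (corrected) form your displayed identity becomes a tautology, not $1=0$. So do not rely on the second route; your first argument via $\mathring{R}\equiv 0$ is the one that stands on its own.
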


\begin{proof}
	From (\ref{generaltensorproduct}) we have
	\begin{eqnarray*}
		(\mathcal{\mathring{Z}}_1(\xi,X_2).\mathcal{\mathring{Z}}_2)(X_3,X_4)X_5&=&\mathcal{\mathring{Z}}_1(\xi,X_2)\mathcal{\mathring{Z}}_2(X_3,X_4)X_5-\mathcal{\mathring{Z}}_2(\mathcal{\mathring{Z}}_1(\xi,X_2)X_3,X_4)X_5\\&&-\mathcal{\mathring{Z}}_2(X_3,\mathcal{\mathring{Z}}_1(\xi,X_2)X_4)X_5-\mathcal{\mathring{Z}}_2(X_3,X_4)\mathcal{\mathring{Z}}_1(\xi,X_2)X_5\notag 
	\end{eqnarray*}
	for all $ X_2,X_3, X_4, X_5\in \Gamma(TM) $. 
		Suppose that $\mathcal{\mathring{Z}}(\xi ,X_2)\mathcal{\mathring{Z}}=0$. Let take $ X_5=\xi $  then from (\ref{NewZ(X,Y)xi}), (\ref{NewZ(X,xi)Y}) and with a long computations we get 	
\begin{eqnarray*}
\mathcal{\mathring{Z}}(X_2,X_3)X_1&=&K\{[2(\eta(X_3)g(X_1,X_3)-\eta(X_2)g(X_1,X_3))]\xi\\&&-g(\phi X_1,\phi X_2)X_3-g(\phi X_1,\phi X_3)X_2\}
\end{eqnarray*}
By setting $ X_3=\xi $ we obtain
\begin{equation*}
\eta(\mathcal{\mathring{Z}}(X_2,\xi)X_1)=-3Kg(\phi X_2,\phi X_1).
\end{equation*}
Thus from (\ref{NewZ(X,xi)Y}) we have $ g(\phi X_1, \phi X_2)=0. $ So the condition $\mathcal{\mathring{Z}}(\xi ,X_2)\mathcal{\mathring{Z}}=0$ can not satisfy. 
	\end{proof}

\section{Example}

Let $ M $ be a \NKCMM\ which is given in Example 2 with a  \GTNWC. Then from (\ref{example2deriv.}) and by using  (\ref{TNKWdef}) we get 
 \begin{eqnarray*}
&&\mathring{\nabla}_{E_1}E_1=\mathring{\nabla}_{E_2}E_1=\mathring{\nabla}_{E_3}E_1=\mathring{\nabla}_{E_2}E_2=\mathring{\nabla}_{E_2}E_3=\mathring{\nabla}_{E_3}E_2=\mathring{\nabla}_{E_3}E_3=0\\
&&\mathring{\nabla}_{E_1}E_2= E_3, \  \mathring{\nabla}_{E_1}E_3=-E_2.
\end{eqnarray*}
Thus from (\ref{Riemanncurvdef.}) the curvature of $ M $ is obtained as followings:  
\begin{eqnarray*}
&&\mathring{R}_{121}=\mathring{R}_{122}=\mathring{R}_{123}=\mathring{R}_{131}=\mathring{R}_{132}=\mathring{R}_{133}=\mathring{R}_{231}=0,\\
&&\mathring{R}_{232}=-2E_3,\mathring{R}_{233}=2E_2 \ \ 
\end{eqnarray*}
where $ \mathring{R}_{ijk}=\mathring{R}(E_i,E_j)E_k $. Also we can obtain same results from (\ref{TNKRiemanCRV}). 
By using the definition of \GTNWC\ , we get $ \mathring{\nabla}_{E_i}E_1=0 $, $ (\mathring{\nabla}_{E_i}\phi)E_j=0 $ and $  (\mathring{\nabla}_{E_i}\eta)E_j=0$ for $ 1\leq i,j \leq 3.  $\par 
The Ricci and scalar curvature of $ M $ is obtained by 
\begin{eqnarray*}
\mathring{S}(E_1,E_1)=0, \ \mathring{S}(E_2,E_2)=2, \ \mathring{S}(E_3,E_3)=2
\end{eqnarray*}
and thus $ \mathring{\tau}=4. $ These results verify (\ref{NewRicci}) and (\ref{Newnablascalar}).
Suppose that the condition $ \mathring{Z}(E_1,E_j).\mathring{S}=0 $ is satisfied on  $ M $ . Then from (\ref{generalT.S}) we get 
\begin{equation*}
\mathring{S}(\mathcal{\mathring{Z}}(E_1 ,E_j)E_k,E_r)+	\mathring{S}(E_k,\mathcal{\mathring{Z}}(E_1,E_j)E_r)=0. 
\end{equation*}
Let choose $ E_r=E_1 $ then from (\ref{NewRic(X,xi)}), (\ref{NewZ(X,xi)xi}) and  (\ref{NewZ(X,xi)Y}) we obtain 
\begin{equation*}
\frac{2}{3}\mathring{S}(E_k,E_j)=0. 
\end{equation*}
If $ E_k=E_j=E_2$ then since $ \mathring{S}(E_2,E_2)=2 $ there is contradiction. So $ \mathring{Z}(E_1,E_j).\mathring{S}=0 $ can not satisfy on $ M $. This is verified the Theorem 5.

\end{document}